\newtheorem{theo}{Theorem}[section]
\newtheorem{prop}[theo]{Proposition}
\newtheorem{coro}[theo]{Corollary}
\theoremstyle{definition}
\theoremstyle{remark}
\newtheorem{rema}[theo]{Remark}
\newcommand{\nwc}{\newcommand}
\nwc{\eps}{\epsilon}
\nwc{\vareps}{\varepsilon}
\nwc{\Oph}{\operatorname{Op}_\hbar}
\nwc{\la}{\langle}
\nwc{\ra}{\rangle}
\nwc{\mf}{\mathbf} 
\nwc{\blds}{\boldsymbol} 
\nwc{\ml}{\mathcal} 
\nwc{\defeq}{\stackrel{\rm{def}}{=}}
\nwc{\cE}{\ml{E}}
\nwc{\cN}{\ml{N}}
\nwc{\cO}{\ml{O}}
\nwc{\cP}{\ml{P}}
\nwc{\cU}{\ml{U}}
\nwc{\cV}{\ml{V}}
\nwc{\cW}{\ml{W}}
\nwc{\tU}{\widetilde{U}}
\nwc{\IN}{\mathbb{N}}
\nwc{\IR}{\mathbb{R}}
\nwc{\IZ}{\mathbb{Z}}
\nwc{\IC}{\mathbb{C}}
\nwc{\IT}{\mathbb{T}}
\nwc{\tP}{\widetilde{P}}
\nwc{\tPi}{\widetilde{\Pi}}
\nwc{\tV}{\widetilde{V}}
\nwc{\supp}{\operatorname{supp}}
\nwc{\rest}{\restriction}
\renewcommand{\phi}{\varphi}
\title[Quantitative equidistribution properties of toral eigenfunctions]
{Quantitative equidistribution properties\\ of toral eigenfunctions}
\author{Hamid Hezari }
\address{Department of Mathematics, UC Irvine, Irvine, CA 92617, USA} \email{hezari@math.uci.edu}
\author[Gabriel Rivi\`ere]{Gabriel Rivi\`ere}
\address{Laboratoire Paul Painlev\'e (U.M.R. CNRS 8524), U.F.R. de Math\'ematiques, Universit\'e Lille 1, 59655 Villeneuve d'Ascq Cedex, France}
\email{gabriel.riviere@math.univ-lille1.fr}
\date{\today}
\begin{document}

\begin{abstract} In this note, we prove quantitative equidistribution properties for orthonormal bases of eigenfunctions of the Laplacian on the rational $d$-torus. We show that the rate of equidistribution of such eigenfunctions is polynomial. We also prove that equidistribution of eigenfunctions holds for symbols supported in balls with a radius shrinking at a polynomial rate.


\end{abstract}

\maketitle
\section{Introduction}

In~\cite{Sh74, Ze87, CdV85}, Shnirelman, Zelditch, and Colin de Verdi\`ere proved that, on a compact connected Riemannian manifold $(M,g)$ without boundary, whose geodesic flow is ergodic for the Liouville measure, the eigenfunctions of the Laplacian are quantum ergodic. Quantum ergodicity means that, for any orthonormal basis of eigenfunctions, there exists a full density subsequence along which the associated microlocal lift on the unit cotangent bundle $S^*M$ tend weakly to the Liouville measure on $S^*M$. The main example of ergodic geodesic flow is given by the geodesic flow on negatively curved manifolds. Thus, in this geometric context, eigenfunctions of the Laplacian are quantum ergodic. On a general compact Riemannian manifold $(M,g)$, the geodesic flow is not ergodic for the Liouville measure, and the above result can be extended by using the ergodic decomposition of the Liouville measure -- see for instance~\cite{Ri13}.

 A natural example is the case of the rational torus $\IT^d=\IR^d/\IZ^d$ endowed with its canonical metric. In this setting, the geodesic flow is not ergodic, but if one considers symbols dependent on $x$ and independent of $\xi$, then an ergodic property holds for such symbols -- see proposition~\ref{p:moment-torus} below. Using this observation, one can show that eigenfunctions become equidistributed\footnote{Note that they do not equidistribute on $S^*\IT^d$.} on the configuration space $\IT^d$~\cite{MaRu12, Ri13}. In this paper, we will give quantitative versions of this equidistribution property of toral eigenfunctions. In fact, these quantitative equidistribution properties are originated and motivated by some conjectures concerning the eigenfunctions of negatively curved manifolds. The two related topics we shall be concerned with are:
\begin{itemize}
\item The rate of equidistribution (Theorem~\ref{p:rate-QE-torus}).  
\item Small scale equidistribution (Corollary~\ref{c:smallball}). 
\end{itemize}
Also, in the last section, as an easy corollary of Zygmund's theorem and the equidistribution results of \cite{MaRu12, Ri13}, we show that the quantum ergodicity holds on the 2-torus for $L^2$ symbols. 

\begin{rema}It is worth mentioning that on the rational torus all the quantum limits of eigenfunctions of the Laplacian are absolutely continuous with respect to the Lebesgue measure ~\cite{Ja97, AM14}. In fact in dimension two, by Zygmund's theorem, there is a uniform bound for the $L^4$ norm of all $L^2$ normalized eigenfunctions, and hence all quantum limits in this case have density functions in $L^2$. This was refined by Jakobson in~\cite{Ja97} where he proved that the density function is a trigonometric polynomial. \end{rema}

Before we state our results, we fix some notations. 

Throughout the paper we denote by $\mathbb{T}^d:=\IR^d/\IZ^d$ the rational torus with the standard metric, and we denote by $dx$ the normalized volume measure induced by the standard metric.  

Let $\alpha>0$ be some fixed positive number. For every $0<\hbar\leq 1$, we consider an orthonormal basis $(\psi^{j}_{\hbar})_{j=1,\ldots, N(\hbar)}$ of the subspace 
$$\ml{H}_{\hbar}:=\mathbf{1}_{[1-\alpha\hbar,1+\alpha\hbar]}(-\hbar^2\Delta)L^2(\mathbb{T}^d),$$ made of eigenfunctions of $-\hbar^2\Delta$. According to the Weyl's law -- see e.g.~\cite{DuGu75}, one has $N(\hbar)\sim  \alpha A_d\hbar^{1-d}$ for some constant $A_d$ depending only on $d$. For each $1\leq j\leq N(\hbar)$, we denote $E_j(\hbar) \in [1-\alpha\hbar,1+\alpha\hbar]$ to be the eigenvalue corresponding to $\psi_j^\hbar$:
$$-\hbar^2\Delta\psi_{\hbar}^j=E_j(\hbar)\psi_{\hbar}^j.$$

\subsection{Rate of equidistribution} Our first result states that
\begin{theo}\label{p:rate-QE-torus} Let $\mathbb{T}^d=\mathbb{R}^d/\IZ^d$ be the rational torus with $d\geq 2$ and let $a$ be an element in $\ml{C}^{\infty}(\IT^d)$ (independent of $\hbar$). Then, there exists some constant $C_a>0$ such that, for any orthonormal basis $(\psi_{\hbar}^j)_{j=1,\ldots N(\hbar)}$ of $\mathbf{1}_{[1-\alpha\hbar,1+\alpha\hbar]}(-\hbar^2\Delta)L^2(\IT^d)$ made of eigenfunctions 
of $-\hbar^2\Delta$,
 $$\frac{1}{N(\hbar)}\sum_{j=1}^{N(\hbar)}\left|\int_{\IT^d}a|\psi_{\hbar}^j|^2dx-\int_{\IT^d}adx\right|^2\leq C_a\hbar^{\frac{2}{3}}.$$
\end{theo}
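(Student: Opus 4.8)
The plan is to estimate the left-hand side by expanding $a$ in Fourier series and controlling the off-diagonal matrix elements $\langle \Op(e^{2\pi i k\cdot x})\psi_\hbar^j, \psi_\hbar^j\rangle$ through a commutator/averaging trick together with the arithmetic structure of the torus. Write $a = \sum_{k\in\IZ^d} \hat a(k) e_k$ with $e_k(x) = e^{2\pi i k\cdot x}$, so that
\begin{equation*}
\int_{\IT^d} a|\psi_\hbar^j|^2\, dx - \int_{\IT^d} a\, dx = \sum_{k\neq 0} \hat a(k)\, \langle e_k \psi_\hbar^j, \psi_\hbar^j\rangle.
\end{equation*}
Since $a\in\ml{C}^\infty(\IT^d)$, the coefficients $\hat a(k)$ decay faster than any polynomial in $|k|$, so it suffices to obtain, for each fixed $k\neq 0$, a bound of the form
\begin{equation*}
\frac{1}{N(\hbar)}\sum_{j=1}^{N(\hbar)} \left|\langle e_k \psi_\hbar^j, \psi_\hbar^j\rangle\right|^2 \leq C\, \hbar^{2/3}\, p(|k|)
\end{equation*}
for some fixed polynomial $p$, and then sum in $k$ using the rapid decay of $\hat a$; the Cauchy--Schwarz inequality in the $k$-sum absorbs the polynomial loss.

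The key step is the per-frequency estimate. Here I would use the standard observation that $e_k$ intertwines Fourier modes: if $\psi = \sum_{\xi} c_\xi e_\xi$ then $e_k\psi = \sum_\xi c_\xi e_{\xi+k}$, so $\langle e_k\psi_\hbar^j,\psi_\hbar^j\rangle = \sum_{\xi} c_\xi^{(j)} \overline{c_{\xi+k}^{(j)}}$, and this is supported on pairs of lattice points $\xi, \xi+k$ both lying (up to $O(\hbar)$) on the sphere of radius $\hbar^{-1}$. Two lattice points at fixed distance $|k|$ both near a sphere of radius $R\sim\hbar^{-1}$ must lie in a thin annular region; quantitatively, writing $|\xi|^2 = R^2 E_i$ and $|\xi+k|^2 = R^2 E_j$ forces $2k\cdot\xi = R^2(E_j - E_i) - |k|^2$, which pins $\xi$ to a slab of thickness $O(R^2\hbar/|k|) = O(R/|k|)$ transverse to $k$. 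Combining this geometric constraint with the orthonormality $\sum_j |c_\xi^{(j)}|^2 \leq 1$ (Bessel) and Cauchy--Schwarz, one bounds $\frac{1}{N(\hbar)}\sum_j |\langle e_k\psi_\hbar^j,\psi_\hbar^j\rangle|^2$ by $N(\hbar)^{-1}$ times the number of lattice points in such a slab intersected with the annulus, i.e.\ by a ratio of the slab-annulus lattice count to the full-annulus count $N(\hbar)\sim \hbar^{1-d}$. This is exactly where the exponent $2/3$ should emerge: the relevant comparison is between all lattice points on a sphere of radius $R$ and those additionally confined to a hyperplane-slab, and classical bounds on lattice points near spheres (the $\hbar^{2/3}$ presumably traces to the $O(R^{1/3})$-type savings, or to an elementary divisor/counting argument in the $k$-direction) give the claimed gain over the trivial bound.

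The main obstacle I anticipate is making the lattice-point count uniform and getting the exponent sharp: one needs a bound on the number of lattice points $\xi$ with $R^2(1-\alpha\hbar) \leq |\xi|^2 \leq R^2(1+\alpha\hbar)$ that \emph{also} satisfy $|k\cdot\xi - c| \leq \hbar^{-1}$ for a given $c$, and this should be small compared to $N(\hbar)$ by a factor like $\hbar^{2/3}$. A clean way around delicate analytic number theory is to fix $k$, decompose the annulus into $O(|k|)$ parallel slabs transverse to $k$ according to the value of $k\cdot\xi \bmod$ (appropriate spacing), note that $\langle e_k\psi,\psi\rangle$ only pairs adjacent slabs, and use an ``almost-orthogonality'' / Cotlar--Stein-type argument so that the sum over slabs of the squared contributions is controlled by the worst single slab-pair; the worst slab-annulus then contains at most $O(\hbar^{2/3} N(\hbar))$ lattice points by a dimensional/volume estimate (thickness $\hbar$ in one direction vs.\ $\hbar$ in the radial direction gives codimension-2, hence a factor $\hbar^2$ off a $(d-2)$-sphere, but the $\alpha\hbar$-thickening of the radial shell only kicks in for $\hbar^{-1/3}$-separated values, yielding the $\hbar^{2/3}$). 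I would first carry out the $d=2$ case to pin down the exponent bookkeeping, then note that the higher-dimensional estimate follows by fibering over the directions orthogonal to $k$ and the radial direction.
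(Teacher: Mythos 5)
Your approach is genuinely different from the paper's. The paper averages the observable along the quantum flow for time $T=\hbar^{-\nu_0}$ (Egorov), controls the resulting average operator by a trace computed via Poisson summation and stationary phase, and balances the Birkhoff-average decay $\mathcal{O}(T^{-1})$ from Proposition~\ref{p:moment-torus} against the semiclassical error $\mathcal{O}((\hbar T)^2)$; setting $\nu_0 = 2/3$ is precisely what produces the exponent $\hbar^{2/3}$. Your plan is instead Fourier-arithmetic: expand $a$, reduce to a per-frequency variance via a Hilbert--Schmidt bound, and count lattice pairs. This is the kind of ``more arithmetic'' route the authors allude to at the end of the introduction (citing Rudnick), and if carried out correctly it would in fact give a \emph{stronger} bound than $\hbar^{2/3}$.

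However, the central step of your proposal has a gap. You note that $\langle e_k\psi^j_\hbar,\psi^j_\hbar\rangle=\sum_\xi c^{(j)}_\xi\overline{c^{(j)}_{\xi+k}}$ is supported on pairs $(\xi,\xi+k)$ both lying near the sphere, and then allow $|\xi|^2$ and $|\xi+k|^2$ to correspond to two different energies $E_i\neq E_j$ in $[1-\alpha\hbar,1+\alpha\hbar]$. This yields $|2k\cdot\xi+|k|^2|=\mathcal{O}(\hbar^{-1})$, i.e.\ a slab of thickness $\mathcal{O}(\hbar^{-1}/|k|)$ transverse to $k$, which for fixed $k$ is a large portion of the annulus and carries no useful $\hbar$-decay. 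The structural fact you are not exploiting is that each $\psi^j_\hbar$ is an \emph{eigenfunction} of $-\hbar^2\Delta$, so its Fourier support lies on a \emph{single} sphere $\{|\xi|^2=n_j\}$. Thus both $\xi$ and $\xi+k$ must satisfy $|\xi|^2=|\xi+k|^2=n_j$ exactly, forcing the hard constraint $2k\cdot\xi=-|k|^2$: a hyperplane, not a slab. Applying the Hilbert--Schmidt bound sphere by sphere, $\sum_{j:\,n_j=n}|\langle e_k\psi_j,\psi_j\rangle|^2\leq\|P_n e_k P_n\|^2_{HS}=\#\{\xi\in\IZ^d:\,|\xi|^2=n,\ 2k\cdot\xi=-|k|^2\}$, and summing over admissible $n$, one is counting lattice points in the intersection of this fixed hyperplane with the Euclidean annulus of radius $\sim\hbar^{-1}$ and radial thickness $\mathcal{O}(1)$; that intersection is a $(d-1)$-dimensional annulus holding $\mathcal{O}_k(\hbar^{-(d-2)})$ lattice points, and dividing by $N(\hbar)\sim\hbar^{1-d}$ gives $\mathcal{O}(\hbar)$ per frequency, not $\hbar^{2/3}$. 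The slab decomposition, the Cotlar--Stein argument, and the final heuristic (``thickness $\hbar$ in one direction vs.\ $\hbar$ in the radial direction,'' ``$\hbar^{-1/3}$-separated values'') do not reflect the actual geometry and do not produce the claimed exponent; with the hyperplane constraint in place none of this is needed.
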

The fact that this quantity converges to $0$ was already observed in~\cite{MaRu12, Ri13}. The novelty here is that we are able to prove that this convergence holds at a polynomial rate. Although this is a natural question, but to our knowledge it has not been addressed in the literature. This result is a direct consequence of Theorem~\ref{p:QE-torus} below which is slightly more general. In~\cite{MaRu12}, Marklof and Rudnick proved that equidistribution on configuration space also holds for eigenfunctions of a rational polygon, hence it would be natural to understand if one can obtain a polynomial rate of convergence in this setting. In the case of the torus, it would also be natural to understand what the optimal rate of convergence should be depending on the dimension.
\begin{rema}
Let us rewrite this statement using the standard convention (the non-semiclassical notation). We define the eigenfunctions $\psi_j$ to be the nonzero solutions to
$$ -\Delta \psi_j = \lambda_j \psi_j, \quad \lambda_j \geq 0. $$ where the eigenvalues $\lambda_j$ are sorted as
$$0=\lambda_1<\lambda_2 \leq \lambda_3 \dots \to \infty.$$
Suppose $(\psi_j)_{j\in\mathbb{N}}$ is an ONB of $L^2(\mathbb T^d)$ made of eigenfunctions. Then, in this notation, the above result can be written as follows
$$V(a,\lambda):=\frac{1}{N_1(\lambda)}\sum_{j:\lambda-\sqrt{\lambda}\leq\lambda_j\leq\lambda+\sqrt{\lambda}}\left|\int_{\IT^d}a|\psi_j|^2dx-\int_{\IT^d}adx\right|^2=\ml{O}(\lambda^{-\frac{1}{3}}),$$
where (see \cite{DuGu75, DiSj99})
$$N_1(\lambda):=\sharp\left\{j:\lambda-\sqrt{\lambda}\leq\lambda_j\leq\lambda+\sqrt{\lambda}\right\}\sim C_d\lambda^{\frac{d-1}{2}},$$
with $C_d>0$ depending only on $d$. For \emph{chaotic} systems, it is conjectured in the physics literature~\cite{FePe86}, that $V(a,\lambda)$ is of order $\lambda^{\frac{1-d}{2}}$. In the case of negatively curved manifolds, the best known upper bound is $\ml{O}(|\log\lambda|^{-1})$~\cite{Ze94, Sch06}. We emphasize that our general strategy is the same as the ones in these two references: the main inputs are that we are able to use the semiclassical approximation for much longer times and that we have a better control on the error terms due to the exact formulas one has on $\IT^d$. Finally, in the case of Hecke eigenfunctions on the modular surface, we note that the upper bound $\ml{O}(\lambda^{-\frac{1}{2}+\eps})$ was proved in~\cite{LuSa95} for spectral intervals of the form $[\lambda,2\lambda]$.
\end{rema}

\subsection{Small-scale equidistribution} Our next result concerns equidistribution properties of toral eigenfunctions in balls of shrinking radius. This question is motivated by our recent work~\cite{HeRi14}, where we showed that on negatively curved manifolds quantum ergodicity holds for symbols carried on balls whose radius shrink at a logarithmic rate (see also~\cite{Yo13, Han14, LMR15}), and where we found some applications to $L^p$ estimates and the size of nodal sets. In the case of $\mathbb{T}^d$, one can also prove a quantitative equidistribution result where symbols are allowed to depend on $\hbar$. This is the content of the following theorem which is our main result:

\begin{theo}\label{p:QE-torus} Let $\mathbb{T}^d=\mathbb{R}^d/\IZ^d$ be the rational torus with $d\geq 2$. Let $s>\frac{d+4}{2}$, $\nu_0\geq 0$, and $\nu_1\geq 0$. 
Suppose $a=(a_{\hbar})_{0<\hbar\leq 1}\in\ml{C}^{\infty}(\IT^d)$ is a symbol such that 
for every $\beta$ in $\mathbb{N}^d$, there exists $C_{\beta}>0$ satisfying
\begin{equation}\label{e:seminorm}\forall 0<\hbar\leq 1,\ \forall x\in\IT^d,\ |\partial^{\beta}_xa_{\hbar}|\leq C_{\beta}\hbar^{-\nu_1|\beta|}.\end{equation}
Then, there exists $\hbar_0>0$ such that, for any $0<\hbar\leq\hbar_0$, and for any orthonormal basis $(\psi_{\hbar}^j)_{j=1,\ldots N(\hbar)}$ of $\mathbf{1}_{[1-\alpha\hbar,1+\alpha\hbar]}(-\hbar^2\Delta)L^2(\IT^d)$ made of eigenfunctions 
of $-\hbar^2\Delta$, one has
$$\frac{1}{N(\hbar)}\sum_{j=1}^{N(\hbar)}\left|\int_{\IT^d}a_{\hbar}|\psi_{\hbar}^j|^2dx-\int_{\IT^d}a_{\hbar}dx\right|^2\leq C\|a_{\hbar}\|^2_{L^2(\IT^d)}\hbar^{\nu_0}+C\|a_{\hbar}\|^2_{H^{s}(\IT^d)}\hbar^{2-2\nu_0}+C_a\hbar^{2-2(\nu_0+\nu_1)},$$
where $C$ is independent of $a$, $\nu_0$ and $\nu_1$, and where $C_a$ depends only on $\nu_0$, $\nu_1$ and on a finite number\footnote{The $\beta$ involved in the constant depends on the choice of $\nu_0$ and $\nu_1$.} of the constants $C_{\beta}$ appearing in~\eqref{e:seminorm}.
\end{theo}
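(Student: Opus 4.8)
The plan is to expand the variance sum using an explicit Fourier description of toral eigenfunctions and then separate the "diagonal" contribution (which gives the mean $\int a_\hbar\,dx$) from the "off-diagonal" one (which is the source of the error). Write each eigenfunction $\psi_\hbar^j = \sum_{k \in \Lambda_\hbar^j} c_{j,k} e^{2\pi i k\cdot x}$, where $\Lambda_\hbar^j \subset \IZ^d$ is the set of lattice points on the sphere $\{4\pi^2|k|^2 = E_j(\hbar)/\hbar^2\}$, so all frequencies appearing in $\psi_\hbar^j$ have the same length. Then $\int_{\IT^d} a_\hbar |\psi_\hbar^j|^2\,dx = \sum_{k,k'} \overline{c_{j,k'}} c_{j,k} \widehat{a_\hbar}(k-k')$, and the $k=k'$ terms sum to $\widehat{a_\hbar}(0) = \int a_\hbar\,dx$ since $\sum_k |c_{j,k}|^2 = 1$. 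Subtracting the mean leaves $\sum_{k \neq k', |k|=|k'|} \overline{c_{j,k'}}c_{j,k}\widehat{a_\hbar}(k-k')$; by Cauchy--Schwarz in $j$ and orthonormality one is led to bound, for each fixed nonzero $m \in \IZ^d$, the number of pairs $(k,k+m)$ with $|k| = |k+m|$ and $|k|$ comparable to $\hbar^{-1}$, i.e. lattice points on a sphere lying in a fixed affine hyperplane — this is the key arithmetic input. I would split $\widehat{a_\hbar}$ into low frequencies $|m| \leq \hbar^{-\nu_0}$ and high frequencies $|m| > \hbar^{-\nu_0}$: the high part is controlled by the $H^s$ norm (this is where $s > \frac{d+4}{2}$ enters, to sum $\langle m\rangle^{-2s}$ against the number of lattice points on a sphere, which is $O(\hbar^{-(d-2)})$ crudely, or better on average), producing the $\|a_\hbar\|_{H^s}^2\hbar^{2-2\nu_0}$ term.

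For the low-frequency part, the plan is to invoke the ergodicity-type estimate for $x$-dependent symbols already available on the torus (Proposition~\ref{p:moment-torus}, which underlies the qualitative equidistribution of \cite{MaRu12, Ri13}) applied to each Fourier mode $e^{2\pi i m\cdot x}$ with $m \neq 0$ fixed, and to track the dependence of the resulting bound on $|m|$. Concretely, the matrix element $\langle \Op_\hbar(e^{2\pi i m\cdot x})\psi_\hbar^j, \psi_\hbar^j\rangle$ vanishes unless the translate by $m$ of the frequency sphere meets the sphere itself, and when it does not vanish it is a sum over the (few) such resonant $k$; averaging over $j$ and using that an orthonormal basis diagonalizes, the whole sum over $j$ of the squared modulus becomes the Hilbert--Schmidt norm squared of the off-diagonal part of $\Op_\hbar(e^{2\pi i m\cdot x})$ restricted to $\ml H_\hbar$, divided by $N(\hbar)$. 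Estimating this requires counting, summed over the relevant energy shell, lattice points $k$ with $|k| = |k+m|$; the number of such $k$ on the sphere of radius $\sim \hbar^{-1}$ is by standard bounds $O(\hbar^{-(d-2)+\eps})$, while $N(\hbar) \sim \hbar^{1-d}$, so each fixed $m$ contributes $O(\hbar)$ times a factor growing polynomially in $|m|$; summing over $|m| \leq \hbar^{-\nu_0}$ and using $\|a_\hbar\|_{L^2}$ to bound $\sum |\widehat{a_\hbar}(m)|$-type quantities yields the $\|a_\hbar\|_{L^2}^2\hbar^{\nu_0}$ term, and the commutator/symbolic-calculus error from replacing $a_\hbar$ by its quantization (whose derivatives cost $\hbar^{-\nu_1}$ each) yields the last term $C_a\hbar^{2-2(\nu_0+\nu_1)}$.

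More carefully, the strategy I would actually follow is the Egorov-plus-averaging scheme of \cite{Ze94, Sch06} alluded to in the remark: form the time average $\frac{1}{T}\int_0^T e^{it\hbar\Delta/?}\Op_\hbar(a_\hbar)e^{-it\cdots}\,dt$ — equivalently, on the torus, average the symbol $a_\hbar$ along the geodesic flow in the direction $\xi$ — so that only the $\xi$-independent mean survives in the limit, with an error governed by how fast the spatial Fourier modes decorrelate along lines of rational slope. On $\IT^d$ this flow average can be computed \emph{exactly}: $\frac{1}{T}\int_0^T e^{2\pi i m\cdot(x+t\xi)}\,dt$ has modulus $O(1/(T|m\cdot\xi|))$ when $m\cdot\xi \neq 0$, and the set of $\xi$ on the energy sphere with $|m\cdot\xi|$ small has small measure. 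Optimizing $T$ against the energy window $\alpha\hbar$ (one may take $T$ as large as a negative power of $\hbar$, which is the sense in which "semiclassical approximation for much longer times" is used) and against $|m| \lesssim \hbar^{-\nu_0}$ produces the three error terms with exactly the stated powers; Theorem~\ref{p:rate-QE-torus} then follows by taking $\nu_1 = 0$, $a_\hbar = a$ fixed so $\|a\|_{H^s}$ and $C_a$ are $\hbar$-independent constants, and optimizing $\nu_0 = 2/3$.

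The main obstacle, and the place where the torus-specific structure is essential, is the arithmetic lattice-point count: controlling uniformly in $m$ (up to $|m| \sim \hbar^{-\nu_0}$) the number of lattice points of length $\sim \hbar^{-1}$ on the sphere that also satisfy the linear constraint $2k\cdot m = -|m|^2$, and more delicately bounding the \emph{weighted} sums $\sum_j |c_{j,k}| |c_{j,k+m}|$ that appear before one throws away the coefficients — a naive Cauchy--Schwarz loses too much when the multiplicity $|\Lambda_\hbar^j|$ is large. Handling this requires either exploiting orthonormality across the whole basis to convert these sums into Hilbert--Schmidt norms (so that only the \emph{total} number of resonant pairs in the energy shell matters, not their distribution among individual eigenspaces) or using divisor-type bounds on representations of integers by sums of $d$ squares; getting the $\eps$-losses in these counts to be absorbable into the stated polynomial powers, and making sure the constant $C$ is genuinely independent of $\nu_0,\nu_1$ while $C_a$ depends only on finitely many seminorms, is the technical heart of the argument.
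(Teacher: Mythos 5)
Your proposal contains two distinct sketches and it is worth separating them, because they have very different fates.

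The first sketch (diagonal/off-diagonal splitting of $\int a_\hbar|\psi_\hbar^j|^2$ in the exponential basis, and then counting lattice points $k$ on the frequency sphere with $|k|=|k+m|$) is \emph{not} the route the paper takes, and you have correctly identified that it leads into arithmetic territory (resonant-pair counts, divisor bounds, $\eps$-losses); the authors explicitly remark that their proof avoids such tools. The second sketch (Egorov-plus-averaging in the spirit of Zelditch--Schubert, with $T=\hbar^{-\nu_0}$) is the right one, but you do not actually carry out the step that makes the lattice-point problem disappear. The key move in the paper is the simple chain
$V_{\hbar,2}(a)=\frac{1}{N}\sum_j|\langle\psi_\hbar^j,A(T,\hbar)\psi_\hbar^j\rangle|^2
\leq \frac{1}{N}\sum_j\langle\psi_\hbar^j,A(T,\hbar)^2\psi_\hbar^j\rangle
= \frac{1}{N}\sum_{k\in\text{shell}}\|A(T,\hbar)e_k\|_{L^2}^2$,
where one uses Cauchy--Schwarz, self-adjointness of $A(T,\hbar)$, and the fact that a trace over the spectral window can be computed in the Fourier basis $\{e_k\}$ rather than the eigenbasis $\{\psi_\hbar^j\}$. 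After this, $A(T,\hbar)e_k$ is evaluated \emph{exactly} (it is $e_k$ times the time average of $\overline{a}$ translated by $2\pi\hbar kt$, with an explicitly controllable Schrödinger phase), and the sum over $k$ in the shell is handled by smoothing with a Schwartz $\rho$, Poisson summation in $k$, and (non-)stationary phase, i.e., by Weyl-law trace asymptotics. There is no constraint $|k|=|k+m|$ anywhere; the only lattice sum is over the shell itself, and it is replaced by a phase-space integral to which Proposition~\ref{p:moment-torus} applies. So the ``technical heart'' you describe --- uniformly counting lattice points of length $\sim\hbar^{-1}$ on a sphere with the linear constraint $2k\cdot m=-|m|^2$ --- is precisely what the paper's argument is designed to sidestep, and your proposal as written does not show how to get that count in a form strong enough to yield $\hbar^{\nu_0}$.

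A second concrete discrepancy: you attribute the $\|a_\hbar\|_{H^s}^2\hbar^{2-2\nu_0}$ term to a low/high frequency split of $\widehat{a_\hbar}$ at $|m|\sim\hbar^{-\nu_0}$. In the paper this term has a different origin: it is the cost of replacing the exact Schrödinger conjugation $e^{-it\hbar\Delta/2}\overline{a}\,e^{it\hbar\Delta/2}$ acting on $e_k$ (which introduces a phase $e^{it\hbar(2\pi)^2\|p\|^2/2}$ on the $p$-th Fourier mode of $a$) by the geodesic translate $\overline{a}(x+2\pi\hbar kt)$; the phase error is $\mathcal{O}(\hbar t\sum_p\|p\|^2|\hat{a}_p|)\leq c_s\hbar t\|\overline{a}\|_{H^s}$ for $s>\frac{d+4}{2}$, and with $t\leq T=\hbar^{-\nu_0}$ this is what produces $\hbar^{2-2\nu_0}\|a_\hbar\|_{H^s}^2$ after squaring. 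Likewise, the $C_a\hbar^{2-2(\nu_0+\nu_1)}$ term comes from the non-stationary phase tails ($l\neq 0$ in Poisson summation) and the stationary phase remainder, using that $b_\hbar(x,\xi)=\chi_1(\|\xi\|^2)\big(\frac{1}{T}\int_0^T a(x+t\xi)dt\big)^2$ has derivatives costing $\hbar^{-(\nu_0+\nu_1)}$; it is not a commutator/symbolic-calculus error in the sense you describe (there is in fact no $\Op_\hbar$ in the proof at all --- the symbol is multiplication by a function of $x$, and everything is done with explicit Fourier formulas). So while your second sketch gestures in the right direction, the mechanisms you assign to each of the three error terms do not match the actual proof, and the step that turns the eigenfunction variance into a tractable Fourier-basis trace (and thereby eliminates the arithmetic obstacle) is missing.
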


As was already mentioned, this result implies Theorem~\ref{p:rate-QE-torus} by picking $a$ independent of $\hbar$, $\nu_1=0$ and $\nu_0=\frac{1}{2}$. This theorem also allows us to show that on the rational torus, the eigenfunctions equidistribute on balls whose radius shrink at a polynomial rate. More precisely, by choosing $a_\hbar$ to be certain cutoff functions supported in geodesic balls of radius $\hbar ^{\nu_1}$, and, using an extraction argument (see for instance section 3.2 of \cite{HeRi14} and the proof of corollary~$1.7$ in~\cite{Han14}), we get

\begin{coro}\label{c:smallball} Let $0<\nu_1<\frac{2}{7d+4}$. Then there exists $0<\hbar_0\leq 1/2$ such that given any orthonormal basis $(\psi_{\hbar}^j)_{j=1,\ldots N(\hbar)}$ of $\mathbf{1}_{[1-\alpha\hbar,1+\alpha\hbar]}(-\hbar^2\Delta)L^2(\IT^d)$ made of eigenfunctions 
of $-\hbar^2\Delta$, one can find a full density subsequence $\Lambda_{\nu_1}(\hbar)$ of $\{1,\ldots , N(\hbar)\}$ such that
\begin{equation}\label{e:vol-eps-ball-torus} \forall 0<\hbar\leq\hbar_0, \, \forall x \in \mathbb T^d, \,\forall j\in \Lambda_{\nu_1}(\hbar) : \qquad  a_1\leq\frac{\int_{B(x,\hbar^{\nu_1})}|\psi_{\hbar}^j(x)|^2dx}{\operatorname{Vol}(B(x,\hbar^{\nu_1}))}\leq a_2,
\end{equation}
where the constants $a_1,a_2>0$ are independent of $\hbar$, $x$, and $j$ a,d $B(x,\hbar^{\nu_1})$ denotes the geodesic ball of radius $\hbar^{\nu_1}$ centered at $x$. 
\end{coro}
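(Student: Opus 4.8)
The plan is to deduce Corollary~\ref{c:smallball} from Theorem~\ref{p:QE-torus} by the standard covering-and-Chebyshev extraction argument (as in section~3.2 of~\cite{HeRi14} and in the proof of Corollary~1.7 of~\cite{Han14}); the only delicate point is the choice of the auxiliary parameters $s$ and $\nu_0$ in terms of $\nu_1$. Fix $\nu_1\in(0,\frac{2}{7d+4})$. Note that $\nu_1<\frac{2}{7d+4}$ is equivalent to $\frac{1}{\nu_1}-3d>\frac{d+4}{2}$, so one may fix a real number $s$ with $\frac{d+4}{2}<s<\frac{1}{\nu_1}-3d$, i.e. with $(3d+s)\nu_1<1$. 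One then fixes
$$\nu_0\in\Bigl(\,2d\nu_1\,,\ \min\bigl\{\,1-(d+s)\nu_1\,,\ 1-\tfrac{(3d+2)\nu_1}{2}\,\bigr\}\,\Bigr),$$
an interval which is nonempty because $2d\nu_1<1-(d+s)\nu_1\Leftrightarrow(3d+s)\nu_1<1$ (just arranged) and $2d\nu_1<1-\tfrac{(3d+2)\nu_1}{2}\Leftrightarrow\nu_1<\tfrac{2}{7d+2}$ (implied by $\nu_1<\tfrac{2}{7d+4}$).

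Put $r=r(\hbar)=\hbar^{\nu_1}$, which tends to $0$, so for small $\hbar$ every geodesic ball of radius $\le 4r$ in $\IT^d$ is isometric to a Euclidean ball. Choose a maximal $\tfrac{r}{10}$-separated subset $\{x_k\}_{k\le K(\hbar)}$ of $\IT^d$; it is $\tfrac{r}{10}$-dense and $K(\hbar)\le C\hbar^{-d\nu_1}$. Fix profiles $\chi^\pm\in\ml{C}^\infty_c(\IR^d;[0,1])$ with $\mathbf 1_{B(0,1/4)}\le\chi^-\le\mathbf 1_{B(0,1/2)}$ and $\mathbf 1_{B(0,2)}\le\chi^+\le\mathbf 1_{B(0,4)}$, and set $a^\pm_{\hbar,k}(x):=\chi^\pm\bigl(r^{-1}(x-x_k)\bigr)$ in flat coordinates centred at $x_k$. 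Each $a^\pm_{\hbar,k}$ satisfies~\eqref{e:seminorm} with constants $C_\beta=\|\partial^\beta\chi^\pm\|_{L^\infty}$ \emph{independent of $k$}, and one has, uniformly in $k$,
$$\int_{\IT^d}a^\pm_{\hbar,k}\,dx\asymp\hbar^{d\nu_1},\qquad \|a^\pm_{\hbar,k}\|_{L^2}^2\le C\hbar^{d\nu_1},\qquad \|a^\pm_{\hbar,k}\|_{H^s}^2\le C\hbar^{(d-2s)\nu_1},$$
the last bound following from the scaling $\widehat{a^\pm_{\hbar,k}}(n)=r^d e^{-2\pi i n\cdot x_k}\widehat{\chi^\pm}(rn)$ together with the rapid decay of $\widehat{\chi^\pm}$.

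Now apply Theorem~\ref{p:QE-torus} with this $s$ and $\nu_0$ to each $a^\pm_{\hbar,k}$; since the relevant $C_\beta$, and hence the constant $C_a$, are uniform in $k$, its right-hand side is at most $\varepsilon(\hbar):=C\bigl(\hbar^{\,\nu_0+d\nu_1}+\hbar^{\,2-2\nu_0+(d-2s)\nu_1}+\hbar^{\,2-2(\nu_0+\nu_1)}\bigr)$, uniformly in $k$. Set $\delta=\delta(\hbar)=c_0\hbar^{d\nu_1}$ with $c_0>0$ a small fixed constant, and let $B_k(\hbar)$ be the set of indices $j$ for which $\bigl|\int_{\IT^d}a^\sigma_{\hbar,k}|\psi^j_\hbar|^2\,dx-\int_{\IT^d}a^\sigma_{\hbar,k}\,dx\bigr|\ge\delta$ for some $\sigma\in\{+,-\}$. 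Chebyshev's inequality gives $\#B_k(\hbar)\le 2N(\hbar)\varepsilon(\hbar)\delta^{-2}$, whence, summing over the $K(\hbar)\le C\hbar^{-d\nu_1}$ centres,
$$\frac{1}{N(\hbar)}\,\#\!\!\bigcup_{k\le K(\hbar)}\!\!B_k(\hbar)\ \le\ \frac{2K(\hbar)\varepsilon(\hbar)}{\delta^2}\ \le\ C'\Bigl(\hbar^{\,\nu_0-2d\nu_1}+\hbar^{\,2-2\nu_0-(2d+2s)\nu_1}+\hbar^{\,2-2\nu_0-(3d+2)\nu_1}\Bigr),$$
and by the choice of $(s,\nu_0)$ all three exponents are strictly positive. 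Hence this fraction tends to $0$; fix $\hbar_0\in(0,\tfrac12]$, also below the threshold of Theorem~\ref{p:QE-torus}, so that it is $<\tfrac12$ for all $\hbar\le\hbar_0$, and set $\Lambda_{\nu_1}(\hbar):=\{1,\dots,N(\hbar)\}\setminus\bigcup_k B_k(\hbar)$, a subset of density one.

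It remains to verify~\eqref{e:vol-eps-ball-torus} for $\hbar\le\hbar_0$, $x\in\IT^d$ and $j\in\Lambda_{\nu_1}(\hbar)$. Choose a net point $x_k$ at distance at most $\tfrac{r}{10}$ from $x$; then $B(x_k,r/4)\subset B(x,r)\subset B(x_k,2r)$, while by construction $a^-_{\hbar,k}\le\mathbf 1_{B(x,r)}\le a^+_{\hbar,k}$, so since $j\notin B_k(\hbar)$,
$$\int_{\IT^d}a^-_{\hbar,k}\,dx-\delta\ \le\ \int_{B(x,r)}|\psi^j_\hbar|^2\,dx\ \le\ \int_{\IT^d}a^+_{\hbar,k}\,dx+\delta.$$
Using $\int a^-_{\hbar,k}\ge c\hbar^{d\nu_1}$, $\int a^+_{\hbar,k}\le C\hbar^{d\nu_1}$, $\vol(B(x,r))\asymp\hbar^{d\nu_1}$ and choosing $c_0<c$, this yields~\eqref{e:vol-eps-ball-torus} with constants $a_1,a_2>0$ depending only on $d$, on $\chi^\pm$ and on $c_0$ — in particular not on $\hbar$, $x$ or $j$, which is the claim. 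The genuine content is the bookkeeping of exponents in the third paragraph (the hypothesis $\nu_1<\frac{2}{7d+4}$ being exactly what makes the admissible window for $s$ and $\nu_0$ nonempty); everything else is the routine extraction procedure already used in~\cite{HeRi14, Han14}.
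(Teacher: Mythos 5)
Your proof is correct and follows exactly the covering/Chebyshev extraction scheme that the paper points to in \cite{HeRi14, Han14}; the bookkeeping of the three exponents (which yields the thresholds $\nu_0>2d\nu_1$, $\nu_0<1-(d+s)\nu_1$, $\nu_0<1-\tfrac{(3d+2)\nu_1}{2}$, with the first two binding and reproducing $\nu_1<\tfrac{2}{7d+4}$ as $s\downarrow\tfrac{d+4}{2}$) is exactly the computation the authors leave implicit. In short, same approach, carried out in full.
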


The fact that $(\Lambda_{\nu_1}(\hbar))_{0<\hbar\leq\hbar_0}$ is of full density exactly means that
$$\lim_{\hbar\rightarrow 0}\frac{|\Lambda_{\nu_1}(\hbar)|}{N(\hbar)}=1.$$
In other words, this statement says that there exists a large proportion of eigenfunctions where the average value of the square of eigenfunctions in shrinking balls of radius $\hbar^{\nu_1}$, are uniformly bounded by two constants. We note that the corollary provides a subsequence of density $1$ subsets that works \emph{uniformly for every point on the torus}. If we had considered only one fixed point $x_0$ in $\IT^d$, we would have obtained a critical exponent of size $\frac{1}{2(d+1)}$ instead of the exponent $\frac{2}{7d+4}$ appearing here. The exponent $\frac{2}{7d+4}$ appearing in this statement is probably not optimal and it is plausible that this exponent can be improved by using for instance methods like the ones used by Bourgain in~\cite{Bo13}. Here, our proof relies only on tools from ergodic theory and semiclassical analysis.

\begin{rema} Using this corollary and the strategy of \cite{HeRi14}, we can in fact improve Sogge's $L^p$ estimates for toral eigenfunctions~\cite{So88}. However the $L^p$ bounds we obtain using this method are not better than the upper bounds proved in~\cite{Zy74, Bo93, Bo13, BoDe14}. 
\end{rema}

\subsection {Quantum ergodicity for $L^2$ observables} 

Motivated by the recent question raised by Zelditch on $L^{\infty}$ quantum ergodicity~\cite{Ze13}, we mention the following nice consequence of the quantum ergodicity property on the $2$-torus:
\begin{coro}\label{p:LinfinityTorus} Let $\mathbb{T}^2=\mathbb{R}^2/\IZ^2$ be the rational $2$-torus. Then, for any orthonormal basis $(\psi_{\hbar}^j)_{j=1,\ldots N(\hbar)}$ of $\mathbf{1}_{[1-\alpha\hbar,1+\alpha\hbar]}(-\hbar^2\Delta)L^2(\IT^2)$ made of eigenfunctions 
of $-\hbar^2\Delta$, there exists a full density subsequence $\Lambda (\hbar)$ of $\{1, 2, \dots, N(\hbar)\}$ such that, for all $a(x)\in L^2(\IT^2)$ (independent of $\hbar$),
\begin{equation} \label{QE2} \lim_{\hbar \to 0, j \in \Lambda(\hbar)} \int_{\IT^2}a(x)|\psi_{\hbar}^j(x)|^2dx= \int_{\IT^2} a(x) dx. \end{equation}

\end{coro}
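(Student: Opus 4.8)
The plan is to deduce Corollary~\ref{p:LinfinityTorus} from the already-established quantum ergodicity on configuration space (\cite{MaRu12, Ri13}) together with Zygmund's uniform $L^4$ bound for toral eigenfunctions in dimension two. The key point is a standard ``extraction + uniform continuity'' argument that upgrades quantum ergodicity from a dense countable family of smooth symbols to all of $L^2(\IT^2)$, using that the functionals $a \mapsto \int_{\IT^2} a |\psi_\hbar^j|^2\,dx$ are \emph{uniformly} (in $\hbar$ and $j$) bounded on $L^2$ thanks to Zygmund.

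\emph{Step 1: Uniform boundedness on $L^2$.} Recall Zygmund's theorem: there is an absolute constant $B$ such that $\|\psi\|_{L^4(\IT^2)}\le B\|\psi\|_{L^2(\IT^2)}$ for every eigenfunction $\psi$ of $\Delta$ on $\IT^2$. Hence, by Cauchy--Schwarz, for any $a\in L^2(\IT^2)$ and any $L^2$-normalized eigenfunction $\psi_\hbar^j$,
$$\left|\int_{\IT^2} a\,|\psi_\hbar^j|^2\,dx\right| \le \|a\|_{L^2(\IT^2)}\,\|\psi_\hbar^j\|_{L^4(\IT^2)}^2 \le B^2\,\|a\|_{L^2(\IT^2)}.$$
Likewise $\left|\int_{\IT^2} a\,dx\right|\le \|a\|_{L^2}$. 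So the ``defect'' $\mu_\hbar^j(a):=\int a|\psi_\hbar^j|^2\,dx-\int a\,dx$ satisfies $|\mu_\hbar^j(a)|\le (B^2+1)\|a\|_{L^2(\IT^2)}$, uniformly in $\hbar$ and $j$.

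\emph{Step 2: Extraction of a full-density subsequence working for a countable dense set.} Fix a countable set $\mathcal{D}=\{a_k\}_{k\ge 1}\subset C^\infty(\IT^2)$ that is dense in $L^2(\IT^2)$. By the configuration-space quantum ergodicity of \cite{MaRu12, Ri13} (which applies to each fixed smooth $a_k$), for each $k$ there is a full-density subsequence of $\{1,\dots,N(\hbar)\}$ along which $\mu_\hbar^j(a_k)\to 0$. A standard diagonal argument (as used, e.g., in the classical proof of quantum ergodicity: pass to the ``variance $\to 0$'' formulation, which by Theorem~\ref{p:rate-QE-torus} holds quantitatively for smooth symbols anyway, then combine the countably many density-one conditions via a single slowly-growing cutoff in $k$) produces one full-density family $\Lambda(\hbar)\subset\{1,\dots,N(\hbar)\}$ such that $\lim_{\hbar\to 0,\ j\in\Lambda(\hbar)}\mu_\hbar^j(a_k)=0$ for \emph{every} $k$ simultaneously.

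\emph{Step 3: Approximation.} Let $a\in L^2(\IT^2)$ and $\ep>0$. Choose $a_k\in\mathcal{D}$ with $\|a-a_k\|_{L^2(\IT^2)}<\ep$. Then, for $j\in\Lambda(\hbar)$,
$$|\mu_\hbar^j(a)| \le |\mu_\hbar^j(a-a_k)| + |\mu_\hbar^j(a_k)| \le (B^2+1)\,\ep + |\mu_\hbar^j(a_k)|,$$
using Step~1 for the first term. Letting $\hbar\to 0$ with $j\in\Lambda(\hbar)$, the second term tends to $0$ by Step~2, so $\limsup_{\hbar\to0,\ j\in\Lambda(\hbar)}|\mu_\hbar^j(a)|\le (B^2+1)\ep$. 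Since $\ep>0$ is arbitrary, \eqref{QE2} follows.

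The only genuinely delicate point is Step~2 --- making one single density-one family $\Lambda(\hbar)$ serve all $a_k$ at once --- but this is the classical diagonalization in the proof of the quantum ergodicity theorem and requires nothing beyond what is recalled above (indeed Theorem~\ref{p:rate-QE-torus} gives a uniform quantitative variance bound for smooth symbols, which streamlines the argument). Everything else is Cauchy--Schwarz plus the $L^2$-density of smooth functions; the essential new ingredient that makes the $L^2$ (rather than merely $C^\infty$) statement work in dimension two is precisely the dimension-two-special Zygmund $L^4$ bound invoked in Step~1, which fails in higher dimensions.
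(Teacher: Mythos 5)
Your proof is correct and follows essentially the same approach as the paper: both rest on Zygmund's uniform $L^4$ bound to get that $|\psi_\hbar^j|^2$ (equivalently, the functionals $a\mapsto\int a|\psi_\hbar^j|^2$) are uniformly $L^2$-bounded, combine this with the standard diagonal extraction that yields a single full-density $\Lambda(\hbar)$ for a countable dense family of test functions, and then pass to all of $L^2$. The only cosmetic difference is that the paper phrases the final density step as weak-$\star$ relative compactness (Banach--Alaoglu) plus uniqueness of the accumulation point, whereas you run a direct $\varepsilon/3$ approximation; these are equivalent, and both reduce to the same uniform-boundedness-plus-density mechanism.
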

The important point in this statement is that convergence holds for any observables in $L^2(\IT^2)$, and not only $\ml{C}^0$ ones (as it is usually the case in quantum ergodicity statements). In particular, it holds for the characteristic function of \emph{any} measurable subset of $\IT^2$. In fact, Zelditch conjectured in~\cite{Ze13} that~\eqref{QE2} holds for any $a$ in $L^{\infty}(M)$ provided $(M,g)$ is a \emph{negatively curved manifold}. As will be explained in section~\ref{s:coro}, this corollary follows directly from a classical result of Zygmund~\cite{Zy74} combined to the above quantum ergodicity property. We emphasize that we do not need all the strength of the above theorems and that this result could be in fact deduced directly from the results in~\cite{MaRu12, Ri13}.

We will now give the proof of Theorem~\ref{p:QE-torus} from which all the other results follow. As in the case of negatively curved manifolds, we will first prove a result on the rate of convergence of Birkhoff averages. Then, we will implement 
this result in the classical proof of quantum ergodicity, and we will have to optimize the size of the different remainders to get our results.

\begin{rema} After communicating this note to Zeev Rudnick, he informed us that part of the above results can in fact be improved using methods of more arithmetic nature. The proof presented here only makes use of standard tools of Fourier analysis, and it is modeled on arguments similar to the ones used to prove rate of quantum ergodicity on negatively curved manifolds~\cite{Ze94, Sch06}. 
\end{rema}

\section{Convergence of Birkhoff averages}

We start with the following proposition which gives us the rate of equidistribution for observables depending only on the $x$ variable:
\begin{prop}\label{p:moment-torus} Let $\IT^d=\IR^d/\IZ^d$ with $d\geq 2$. There exists $C_d>0$ such that, for every $a$ in $\ml{C}^{\infty}(\IT^d)$, one has
$$\int_{S^*\IT^d}\left|\frac{1}{T}\int_0^Ta(x+t\xi)dt-\int_{\IT^d}a(x)dx\right|^{2}dxd\xi\leq \frac{C_d\|a\|_{{L}^{2}(\IT^d)}^{2}}{T}.$$
\end{prop}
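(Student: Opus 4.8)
The plan is to expand everything in Fourier series and exploit the fact that the geodesic flow on $\IT^d$ is the linear flow $x \mapsto x + t\xi$, so that the Birkhoff average acts diagonally on Fourier modes. Write $a(x) = \sum_{k \in \IZ^d} \hat a(k) e^{2\pi i k \cdot x}$, with $\hat a(0) = \int_{\IT^d} a\, dx$. Then
\begin{equation*}
\frac{1}{T}\int_0^T a(x + t\xi)\, dt = \sum_{k \in \IZ^d} \hat a(k)\, e^{2\pi i k\cdot x}\, \frac{1}{T}\int_0^T e^{2\pi i t\, k\cdot\xi}\, dt,
\end{equation*}
and subtracting $\hat a(0)$ simply removes the $k=0$ term. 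Denote by $K_T(k\cdot\xi) = \frac{1}{T}\int_0^T e^{2\pi i t\, k\cdot\xi}\, dt$ the Fejér-type multiplier; it equals $1$ when $k\cdot\xi = 0$ and otherwise equals $\frac{e^{2\pi i T k\cdot\xi} - 1}{2\pi i T\, k\cdot\xi}$, so $|K_T(k\cdot\xi)| \le \min\!\bigl(1, \frac{1}{\pi T |k\cdot\xi|}\bigr)$.

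Next I would integrate $|\,\cdot\,|^2$ over $S^*\IT^d = \IT^d \times S^{d-1}$. The $x$-integration kills the cross terms by orthogonality of the characters, leaving
\begin{equation*}
\int_{S^*\IT^d}\Bigl|\tfrac{1}{T}\!\int_0^T a(x+t\xi)\,dt - \hat a(0)\Bigr|^2 dx\, d\xi = \sum_{k \neq 0} |\hat a(k)|^2 \int_{S^{d-1}} |K_T(k\cdot\xi)|^2\, d\xi.
\end{equation*}
Since $\|a\|_{L^2}^2 = \sum_k |\hat a(k)|^2$, it suffices to show a \emph{uniform} bound $\int_{S^{d-1}} |K_T(k\cdot\xi)|^2\, d\xi \le C_d/T$ for every nonzero $k \in \IZ^d$. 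By rotational symmetry this integral depends only on $|k|$, and writing $\xi = \xi_1 e_1 + \dots$ with $u := \xi_1 \in [-1,1]$ carrying the $|k|$-direction, the pushforward of $d\xi$ under $\xi \mapsto u$ has density $c_d (1-u^2)^{(d-3)/2}$ on $[-1,1]$ (here $d\geq 2$ is what makes this a finite positive measure, with the $d=2$ case giving the arcsine density). So the integral becomes $c_d \int_{-1}^1 |K_T(|k|\,u)|^2 (1-u^2)^{(d-3)/2}\, du$.

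The main obstacle — really the only point requiring care — is estimating this one-variable integral uniformly in $n := |k| \ge 1$. Split $[-1,1]$ into $|u| \le 1/(nT)$ and $|u| > 1/(nT)$. On the first piece use $|K_T| \le 1$ and bound the weight by its sup (for $d \ge 3$) or handle the integrable arcsine singularity directly (for $d=2$); the measure of the piece is $O(1/(nT)) \le O(1/T)$. On the second piece use $|K_T(nu)|^2 \le 1/(\pi n T u)^2$ and bound $(1-u^2)^{(d-3)/2}$ — for $d\ge 3$ it is bounded, for $d=2$ one keeps the weight but the resulting integral $\int_{1/(nT)}^{1} \frac{du}{u^2\sqrt{1-u^2}}$ is still $O(nT)$ near the lower endpoint, which dominates — so $\int_{1/(nT)}^1 \frac{du}{(\pi n T u)^2} \lesssim \frac{1}{(nT)^2}\cdot nT = \frac{1}{nT} \le \frac{1}{T}$. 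In all cases the integral is $O(1/T)$ with a constant depending only on $d$ (and not on $n$), which is exactly the claim. Assembling, $\int_{S^*\IT^d} |\cdots|^2 \le \frac{C_d}{T}\sum_{k\neq 0}|\hat a(k)|^2 \le \frac{C_d\|a\|_{L^2}^2}{T}$.
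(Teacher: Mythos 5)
Your proof is correct and takes essentially the same route as the paper: Fourier expansion, $x$-orthogonality to diagonalize over modes, rotational reduction of the $\xi$-integral to a one-variable integral with weight $(1-u^2)^{(d-3)/2}$, and a two-region split near and away from $u=0$. The only inessential differences are your splitting threshold $1/(\|k\|T)$ instead of the paper's $1/\sqrt{\|k\|T}$, and your use of the pointwise bound $|K_T|\le \min(1, 1/(\pi T|k\cdot\xi|))$ directly, where the paper instead substitutes $u=\pi T\|k\|s$ and invokes $\int_0^\infty \sin^2 u/u^2\,du=\pi/2$; both yield the same $O(1/(\|k\|T))$ bound per nonzero mode.
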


\begin{proof} Let $a$ be a smooth function on $\IT^d$. We write its Fourier decomposition $a:=\sum_{k\in\mathbb{Z}^d}a_k e_k$, where $e_k(x):=e^{2i\pi k.x}.$ We set
$$V(a,T):=\int_{\IT^d}\int_{\mathbb{S}^{d-1}}\left|\frac{1}{T}\int_0^T a(x+t\xi)dt-\int_{\IT^d}a(x)dx\right|^{2}d\xi dx.$$
First, we perform integration in the $x$ variable and we find that
$$V(a,T)=\frac{1}{T^2}\sum_{k\in\IZ^d-\{0\}}|a_k|^2\int_{\mathbb{S}^{d-1}}\left|\int_0^Te^{2i\pi k.t\xi}dt\right|^2d\xi.$$
Now we would like to estimate the integral in each term of the above sum. Using the spherical symmetry first, and then calculating the $dt$ integral, we get
\begin{align*} \int_{\mathbb{S}^{d-1}}\left|\int_0^Te^{2i\pi k.t\xi}dt\right|^2d\xi &=\int_{\mathbb{S}^{d-1}}\left|\int_0^Te^{2i\pi \|k \| t\xi_1}dt\right|^2d\xi 
\\ &= \int_{\mathbb{S}^{d-1}} \frac{\sin^2{(\pi T \|k \| \xi_1)} }{\pi^2 || k||^2 \xi_1^2}d\xi \end{align*}
By using the spherical coordinates and putting $ \xi_1 =\cos \phi$ , $0 \leq \phi \leq \pi$, we obtain
\begin{align*} 
 \int_{\mathbb{S}^{d-1}} \frac{\sin^2{(\pi T\|k \| \xi_1)} }{\pi^2 || k||^2 \xi_1^2}d\xi 
&= C \int_{0}^\pi \frac{\sin^2{(\pi T \|k \| \cos \phi)} }{\pi^2 || k||^2 \cos^2 \phi} (\sin \phi)^{d-2}\, d\phi,
\end{align*}
where the constant $C$ is the value of the integral with respect to the remaining spherical variables. The change of variable $s=\cos \phi$, turns this last integral into 
$$2\int_{0}^1 \frac{\sin^2{(\pi T \|k \| s)} }{\pi^2 || k||^2 s^2} (\sqrt{1-s^2})^{d-3}\, ds. $$ 
We then split this integral into integrals over $[0, \delta]$ and its complement $[\delta, 1]$, where $0< \delta <1$. Clearly  $$\int_{\delta}^1 \frac{\sin^2{(\pi T \|k \| s)} }{\pi^2 || k||^2 s^2} (\sqrt{1-s^2})^{d-3}\, ds \leq \frac{1}{\pi^2||k||^2 \delta^2}.$$ To estimate the integral on $[0, \delta]$, we use the substitution $u= \pi T ||k|| s$. Hence, since $d \geq 2$ we get
\begin{align*}\int_{0}^\delta \frac{\sin^2{(\pi T \|k \| s)} }{\pi^2 || k||^2 s^2} (\sqrt{1-s^2})^{d-3}\, ds 
& \leq \frac{T}{\pi ||k||\sqrt{1-\delta^2}} \int_{0}^{\pi T ||k|| \delta} \frac{\sin^2{u} }{u^2}\, du
\\& \leq \frac{T}{\pi ||k||\sqrt{1-\delta^2}} \int_{0}^{\infty} \frac{\sin^2{u} }{u^2}\, du
\\&=\frac{T}{2||k||\sqrt{1-\delta^2}}.\end{align*}
Therefore, by choosing $\delta =\frac{1}{\sqrt{||k||T}}$ and $T\geq 2$, we get
$$ V(a, T) \leq \frac{C'}{T} \sum_{ k\neq 0} \frac{|a_k|^2}{\|k \|} \leq \frac{C'\|a\|_{{L}^{2}(\IT^d)}^{2}}{T}. $$ 
for some uniform constant $C'>0$.
\end{proof}

\section{Proof of Theorem~\ref{p:QE-torus}}\label{ss:rate-torus}

We fix $a$ in $\ml{C}^{\infty}(\IT^d)$ that potentially depends on $\hbar$, even if we omit the index $\hbar$ in order to alleviate the notations. We also suppose that $a$ belongs to a nice class of symbols. More precisely, there exists $ \nu_1\geq 0$ such that, for every $\alpha$ in $\mathbb{N}^d$, one can find $C_{\alpha}>0$ such that
\begin{equation}\label{e:seminorms}\forall x\in\IT^d,\ |\partial^{\alpha}_xa|\leq C_{\alpha}\hbar^{-\nu_1|\alpha|}.\end{equation}
Without loss of generality, we will also suppose that $a$ is real valued. We set $\overline{a}:=a-\int_{\IT^d}adx$. Our goal is to give an upper bound on the following quantity:
$$V_{\hbar,2}(a)=\frac{1}{N(\hbar)}\sum_{j=1}^{N(\hbar)}\left|\int_{\IT^d}a(x)|\psi_{\hbar}^j(x)|^2dx-\int_{\IT^d}adx\right|^{2}.$$

\subsection{Applying Egorov's theorem} \label{ss:egorov}

We rewrite the previous expression as follows
$$V_{\hbar,2}(a)=\frac{1}{N(\hbar)}\sum_{j=1}^{N(\hbar)}\left|\left\la\psi_{\hbar}^j,\overline{a}\psi_{\hbar}^j\right\ra_{L^2}\right|^{2}.$$
One of the main differences with the negatively curved case treated in~\cite{Ze94, Sch06, Han14, HeRi14} is that we can consider much longer semiclassical times using the fact that we are on $\IT^d$.  Precisely, we fix $\nu_0>0$ and $T=T(\hbar):=\hbar^{-\nu_0}$. We introduce the averaged operator
$$A(T,\hbar):=\frac{1}{T}\int_0^Te^{-\frac{it\hbar\Delta}{2}}\overline{a}e^{\frac{it\hbar\Delta}{2}}dt.$$
Using the fact that $\psi_{\hbar}^j$ is an eigenmode for every $1\leq j\leq N(\hbar)$, one can write that
$$V_{\hbar,2}(a)=\frac{1}{N(\hbar)}\sum_{j=1}^{N(\hbar)}\left|\left\la\psi_{\hbar}^j,A(T,\hbar)\psi_{\hbar}^j\right\ra_{L^2}\right|^{2}.$$
Using the Cauchy-Schwarz inequality, we find that
$$V_{\hbar,2}(a)\leq\frac{1}{N(\hbar)}\sum_{j=1}^{N(\hbar)}\left\la\psi_{\hbar}^j,A(T,\hbar)^2\psi_{\hbar}^j\right\ra_{L^2}.$$
Recall now that $(\psi_{\hbar}^j)_{j=1}^{N(\hbar)}$ is an orthonormal basis of the space 
$$\ml{H}_{\hbar}=\mathbf{1}_{[1-\alpha\hbar,1+\alpha\hbar]}(-\hbar^2\Delta)L^2(\IT^d).$$
We will now take advantage of the fact that we are on the torus. Precisely, we have 
\begin{equation}\label{e:step0}V_{\hbar,2}(a)\leq\frac{1}{N(\hbar)}\sum_{k\in\IZ^d:(2\pi\hbar\|k\|)^2\in[1-\alpha\hbar,1+\alpha\hbar]}\left\| A(T,\hbar)e_k\right\|_{L^2}^2.\end{equation}
where $e_k(x):=e^{2i\pi k.x}.$ We write the following exact formula
$$\left(A(T,\hbar)e_k\right)(x)=\left(\frac{1}{T(\hbar)}\int_0^{T(\hbar)}\sum_{p\in\IZ^d-\{0\}}\hat{a}_p e^{\frac{it\hbar(2\pi)^2\|p\|^2}{2}} e_p(x+2\pi\hbar t k ) dt \right)e_k(x),$$
where $a:=\sum_{p\in\IZ^d}\hat{a}_pe_p(x).$ We now fix $s>\frac{d+4}{2}.$ We note that, uniformly for $x$ in $\IT^d$ and $t$ in $\IR$, one has
$$\left|\sum_{p\in\IZ^d-\{0\}}\hat{a}_p e^{\frac{it\hbar(2\pi)^2\|p\|^2}{2}} e_p(x+2\pi\hbar t k )-\overline{a}(x+2\pi k\hbar t)\right|\leq2\pi^2\hbar t\sum_{p\neq 0}\|p\|^2|\hat{a}_p|\leq c_s \hbar t\|\overline{a}\|_{H^{s}},$$
for some constant $c_s>0$ depending only on $d$ and $s$. Implementing this in our upper bound~\eqref{e:step0}, we obtain that
\begin{equation}\label{e:step1}V_{\hbar,2}(a)\leq\frac{2}{N(\hbar)}\sum_{k\in\IZ^d:(2\pi\hbar\|k\|)^2\in[1-\alpha\hbar,1+\alpha\hbar]}\int_{\IT^d}\left(\frac{1}{T(\hbar)}\int_0^{T(\hbar)}\overline{a}(x+2\pi k\hbar t)dt\right)^2dx+\|\overline{a}\|_{H^{s}}^2\ml{O}(\hbar^{2-2\nu_0}),\end{equation}
where the constant in the remainder is independent of $a$.

\subsection{Trace asymptotics} In order to compute the previous expression, we proceed as in~\cite{DuGu75} -- see~\cite{DiSj99} (Ch.~$11$) for a semiclassical version. We will in fact follow the presentation of Prop.~$1$ in~\cite{Sch06} and we will take advantage of the fact that we are working on $\IT^d$. 

Regarding~\eqref{e:step1}, we now have to estimate
$$\tilde{V}_{\hbar,2}(a):=\frac{1}{N(\hbar)}\sum_{k\in\IZ^d:(2\pi\hbar\|k\|)^2\in[1-\alpha\hbar,1+\alpha\hbar]}\int_{\IT^d}b_{\hbar}(x,2\pi k\hbar)dx.$$
where we set
$$b_{\hbar}(x,\xi):=\chi_1(\|\xi\|^2)\left(\frac{1}{\hbar^{-\nu_0}}\int_0^{\hbar^{-\nu_0}}a(x+t\xi)dt\right)^2,$$
with $0\leq\chi_1\leq 1$ a smooth cutoff function which is equal to $1$ in a small neighborhood of $1$ and is $0$ outside a slightly bigger neighborhood, say outside $[1/4,4]$. We fix a smooth function $ \rho\geq 0$ in the Schwartz class $\ml{S}(\IR)$, which is $\geq 1$ on the interval $[-\alpha,\alpha]$. We also suppose that $\hat{\rho}$ has compact support, say that the support is included in $[-1/8,1/8]$.

\begin{rema} In order to construct such a function, one can start from a nonzero smooth even function $f\geq 0$ which is compactly supported in $[-1/16,1/16]$ and take $\rho$ to be the inverse Fourier transform of $A_0 f*f$ with $A_0>0$ large enough. We note that the function $\hat{\rho}$ satisfies $\hat{\rho}'(0)=0$.
 
\end{rema}
We can then write that
$$\tilde{V}_{\hbar,2}(a)=\frac{1}{N(\hbar)}\sum_{k\in\IZ^d}\rho\left(\frac{4\pi^2\|k\|^2\hbar^2-1}{\hbar}\right)\int_{\IT^d}b_{\hbar}(x,2\pi k\hbar)dx,$$
and thus
$$\tilde{V}_{\hbar,2}(a)=\frac{1}{N(\hbar)}\sum_{k\in\IZ^d}\int_{\IR}\hat{\rho}(\tau)e^{-\frac{i\tau}{\hbar}}e^{\frac{i4\pi^2\tau\|k\|^2\hbar^2}{\hbar}}\int_{\IT^d}b_{\hbar}(x,2\pi k\hbar)dxd\tau.$$
Thanks to the Poisson summation formula, we get
\begin{equation}\label{e:poisson}\tilde{V}_{\hbar,2}(a)=\frac{1}{N(\hbar)}\int_{\IT^d}\left(\sum_{l\in\IZ^d}\frac{1}{(2\pi\hbar)^d}\int_{\IR\times\IR^d}\hat{\rho}(\tau)e^{i\frac{\tau(\|\xi\|^2-1)-\xi.l}{\hbar}}b_{\hbar}(x,\xi)d\tau d\xi\right)dx.\end{equation}
We will now make use of the stationary (and non-stationary) phase lemma. To do so, we fix $l$ in $\IZ^d$ and we denote by $\varphi_l(\tau,\xi)$ to be the phase function of the above oscillatory integral. We observe that, for $l\neq 0$, one has $\|d_{\xi}\varphi_l\|\geq\|l\|-2\tau\|\xi\|\geq \|l\|-1/2$, for $\tau$ in the support of $\hat{\rho}$ and $\|\xi\|^2$ in the support of $\chi_1$. For $l\neq 0$, we introduce the operator
$$P_l:=\frac{\hbar}{i}\frac{d_{\xi}\varphi_l.d_{\xi}}{\|d_{\xi}\varphi_l\|^2}.$$
We perform $N$ integration by parts using this operator and we find that, for every $x$ in $\IT^d$,
$$\left|\frac{1}{(2\pi\hbar)^d}\int_{\IR\times\IR^d}\hat{\rho}(\tau)e^{i\frac{\tau(\|\xi\|^2-1)-\xi.l}{\hbar}}b_{\hbar}(x,\xi)d\tau d\xi\right|\leq C\frac{\hbar^{N(1-\nu_0-\nu_1)-d}}{(\|l\|-1/2)^N},$$
for some uniform constant $C>0$ that depends only on $\rho$, $a$ and $d$. Using the upper bound~\eqref{e:poisson} and taking $N$ large enough in the previous equation, we get
\begin{equation}\label{e:upperbound}V_{\hbar,2}(a)\leq\frac{1}{N(\hbar)}\int_{\IT^d}\left(\frac{1}{(2\pi\hbar)^d}\int_{\IR\times\IR^d}\hat{\rho}(\tau)e^{i\frac{\tau(\|\xi\|^2-1)}{\hbar}}b_{\hbar}(x,\xi)d\tau d\xi\right)dx+\ml{O}(\hbar^{2(1-\nu_0-\nu_1)}).\end{equation}
We now disintegrate the measure $d \xi$ along the energy layers $\{\|\xi\|^2-1=E\}$ (or in other words we use the coarea formula), to write for every $x$ in $\IT^d$,
$$\int_{\IR\times\IR^d}\hat{\rho}(\tau)e^{i\frac{\tau(\|\xi\|^2-1)}{\hbar}}b_{\hbar}(x,\xi)d\tau d\xi=\int_{\IR}\int_{-1}^{+\infty}\hat{\rho}(\tau)e^{i\frac{\tau E}{\hbar}}\la b_{\hbar}\ra(x,E)dEd\tau,$$
where
$$\la b_{\hbar}\ra(x,E):=\int_{\|\xi\|^2-1=E}b_{\hbar}(x,\xi)dL_E(\xi).$$
We can now use the stationary phase formula and the fact that $\hat{\rho}'(0)=0$ -- see for instance~\cite{Zw12} (Ch.~$3$). Precisely, we find that
\begin{equation}\label{e:stat-phase}\int_{\IR\times\IR^d}\hat{\rho}(\tau)e^{i\frac{\tau(\|\xi\|^2-1)}{\hbar}}\tilde{b}_{\hbar}(x,\xi)d\tau d\xi=\hat{\rho}(0)2\pi\hbar \left (\int_{\|\xi\|^2=1}b_{\hbar}(x,\xi)dL_0(\xi)+\ml{O}(\hbar^{2(1-\nu_0-\nu_1)}) \right).\end{equation}
As before we take $N$ large enough (depending only on $\nu_0$, $\nu_1$ and $d$) in the stationary phase lemma to ensure that the remainder is of order $\ml{O}(\hbar^{2(1-\nu_0-\nu_1)})$. We give emphasis that the remainder term is of the form $\hbar^{2(1-\nu_0-\nu_1)}$ and not $\hbar^{1-2(\nu_0+\nu_1)}$: this is due to the fact that $\hat{\rho}'(0)=0$ and to the symmetry of the phase function -- e.g. Theorem~$3.17$ of~\cite{Zw12}. 


Now by the Weyl's law, we know that $N(\hbar)\sim \alpha C_d\hbar^{1-d}$, for some constant depending only on $d$ -- see for example Ch.~$11$ of~\cite{DiSj99}. Thus, combining~\eqref{e:step1},~\eqref{e:upperbound} and~\eqref{e:stat-phase}, we have that
$$V_{\hbar,2}(a)\leq C_0 \int_{\IT^d}\int_{\mathbb{S}^{d-1}}\left|\frac{1}{\hbar^{-\nu_0}}\int_0^{\hbar^{-\nu_0}}a(x+t\xi)dt\right|^2d\xi dx+C_s'\|\overline{a}\|^2_{H^{s}}\hbar^{2-2\nu_0}+\ml{O}(\hbar^{2(1-\nu_0-\nu_1)}),$$
where $C_0$ depends only on $d$ and on the choice of $\rho$, and where $C_s'$ depends only on $s$. We also note that the constant in the remainder depends only on finitely many of the $C_{\alpha}$ appearing in~\eqref{e:seminorms}.

\subsection{The conclusion}

We can now apply Proposition~\ref{p:moment-torus} and we finally find that
$$V_{\hbar,2}(a)\leq C_0C_d\|a\|_{{L}^{2}(\IT^d)}^{2}\hbar^{\nu_0}+\ml{O}(\hbar^{2(1-\nu_0-\nu_1)})+C_0'\|\overline{a}\|^2_{H^{s}(\IT^d)}\hbar^{2-2\nu_0},$$
which concludes the proof of Theorem~\ref{p:QE-torus}.

\section{Proof of Corollary \ref{p:LinfinityTorus}}
\label{s:coro}

The proof of this result is a direct consequence of the quantum ergodicity property on the $2$-torus, of Zygmund's theorem on the $L^4$ norms of the eigenfunctions on the 2-torus~\cite{Zy74}, and of the Banach-Alaoglu theorem. 

Let $(\psi_{\hbar}^j)_{j=1,\ldots N(\hbar)}$ be an orthonormal basis of $\mathbf{1}_{[1-\alpha\hbar,1+\alpha\hbar]}(-\hbar^2\Delta)L^2(\IT^2)$ made of eigenfunctions 
of $-\hbar^2\Delta$ on the rational 2-torus. Then by Theorem~\ref{p:rate-QE-torus}, there exists a full density subsequence of $\Lambda (\hbar)$ of $\{1, 2, \dots, N(\hbar)\}$ such that for all $a(x)\in C^0(\IT^2)$
\begin{equation} \label{QE3} \lim_{\hbar \to 0, j \in \Lambda(\hbar)} \int_{\IT^2}a(x)|\psi_{\hbar}^j(x)|^2dx= \int_{\IT^2} a(x) dx. \end{equation} 
We refer to section~$15.4$ in~\cite{Zw12} for the details of the extraction argument. We want to show that~\eqref{QE3} holds for all $a \in L^2(\IT^2)$. To prove this, we first note that by Zygmund's theorem \cite{Zy74}, there exists a uniform  constant $A$ such that 
$$ \int_{\IT^2} |\psi_{\hbar}^j(x)|^4 dx \leq A^4.$$ 
Thus, the sequence $\ml{F}:=(|\psi_{\hbar}^j|^2)_{1\leq j\leq N(\hbar), 0<\hbar\leq 1}$ is bounded in $L^2$. By the Banach-Alaoglu theorem, it is relatively compact for the weak-$\star$ topology on $L^2(\IT^2)$. On the other hand, by \eqref{QE3}, $\mathcal F$ has at most one weak-$\star$ limit in $L^2$, and that is the constant function $1$. This proves the corollary.



\section*{Acknowledgments}

The first author is partially supported by the NSF grant DMS-0969745. The second author is partially supported by the Agence Nationale de la Recherche through the Labex CEMPI (ANR-11-LABX-0007-01) and the ANR project GeRaSic (ANR-13-BS01-0007-01). We thank  Fabricio Maci\` a and Henrik Uebersch\"ar for discussions related to the results appearing in this note. Finally, we warmly thank Zeev Rudnick for his careful reading of a preliminary version of this note and for pointing us an improvement in the argument of paragraph~\ref{ss:egorov}.

\end{document}